\documentclass[12pt]{article}

\usepackage[margin=0.95in]{geometry}
\usepackage[T1]{fontenc}
\usepackage{libertinus}
\usepackage{microtype}
\usepackage{amsmath,amssymb,amsthm}
\usepackage[hidelinks]{hyperref}
\usepackage[nameinlink,noabbrev]{cleveref}

\hypersetup{
  pdftitle={The sharp m-to-the-two-thirds constant for triangle independence and covering},
  pdfauthor={Author name omitted for review},
  pdfsubject={Extremal graph theory},
  pdfkeywords={triangle-independent edge set, triangle edge cover, Erdos--Gallai--Tuza problem}
}

\newtheorem{theorem}{Theorem}[section]
\newtheorem{lemma}[theorem]{Lemma}
\newtheorem{proposition}[theorem]{Proposition}
\newtheorem{corollary}[theorem]{Corollary}
\crefname{theorem}{theorem}{theorems}
\crefname{lemma}{lemma}{lemmas}
\crefname{proposition}{proposition}{propositions}
\crefname{corollary}{corollary}{corollaries}
\theoremstyle{remark}

\newtheorem{problem}[theorem]{Problem}

\title{Sharp asymptotics for triangle independence and covering numbers}
\author{Zhen Liu\footnote{Email: 1552580575@qq.com}, ~Qinghou Zeng\footnote{Research supported by National Key R\&D Program of China (Grant No. 2023YFA1010202) and National Natural Science Foundation of China (Grant No. 12371342). Email: zengqh@fzu.edu.cn (Corresponding
author)}\\
{\small Center for Discrete Mathematics, Fuzhou University, Fujian, 350003, China}}
\date{}

\begin{document}
\maketitle

\begin{abstract}
For a graph $G$, let $\alpha_1(G)$ be the maximum size of an edge set containing at most one edge from every triangle, and let $\tau_1(G)$ be the minimum size of an edge set meeting every triangle. Erdős, Gallai, and Tuza proved that $\alpha_1(G)+\tau_1(G)=\Omega(m^{2/3})$ for every $m$-edge graph and asked for the optimal asymptotic constant. We prove
\[
\lim_{m\to\infty} \min_{\substack{G\\ |E(G)|=m}} \frac{\alpha_1(G) + \tau_1(G)}{m^{2/3}} = \frac{3}{2},
\]
thereby establishing that the sharp constant is $3/2$ and solving the problem.
\end{abstract}

\noindent\textbf{Keywords.} Triangle-independent edge set; triangle edge cover; extremal graph theory.\\
\textbf{2020 Mathematics Subject Classification.} 05C35, 05C69.

\section{Introduction}
Throughout the paper, all graphs are simple and undirected. Let $G$ be a graph with vertex set $V(G)$ and edge set $E(G)$; we write $e(G)=|E(G)|$ for the number of edges. A triangle is a clique on three vertices.

We will use the following triangle-related graph invariants.
\begin{itemize}
\item $\alpha_1(G)$  denotes the maximum size of an edge set that contains at most one edge from each triangle of $G$. In particular, $\alpha_1(G)$ is called the \emph{triangle-independence number} of $G$.
\item $\tau_1(G)$  denotes the minimum size of an edge set that contains at least one edge from each triangle of $G$. Equivalently, $\tau_1(G)$ is the minimum number of edges whose removal makes $G$ triangle-free, and is called the \emph{triangle edge cover number}.
\end{itemize}

Erd\H{o}s, Gallai, and Tuza introduced these parameters in \cite{EGT96}. They proved that
\[
\alpha_1(G)+\tau_1(G)\ge c\,e(G)^{2/3}
\]
for an absolute constant $c>0$, and a split-graph construction shows that the exponent $2/3$ is best possible. Their Problem~12, also recorded as Problem~48 in \cite{Tuza01}, asks for the optimal asymptotic constant.
\begin{problem}[Erd\H{o}s, Gallai and Tuza, \cite{EGT96}]
Prove that the function
\[
\displaystyle \min_{\substack{G\\ |E(G)|=m}}  \frac {\alpha_1(G) + \tau_1(G)}{m^{2/3}}
\]
tends to a constant as $m \to \infty$, and determine its value.
\end{problem}
The bounds recorded in \cite{Tuza01} are
\[
\liminf_{m\to\infty}\min_{\substack{G\\ |E(G)|=m}}
\frac{\alpha_1(G)+\tau_1(G)}{m^{2/3}}
\ge \frac{1}{\sqrt[3]{6}},
\qquad
\limsup_{m\to\infty}\min_{\substack{G\\ |E(G)|=m}}
\frac{\alpha_1(G)+\tau_1(G)}{m^{2/3}}
\le \sqrt[3]{4}.
\]
A separate line of work concerns the vertex-order inequality $\alpha_1(G)+\tau_1(G)\le \frac{|V(G)|^2}{4}.$ Puleo \cite{Puleo15} and Xu \cite{Xu17} obtained partial results for these vertex-order questions. Norin and Sun \cite{NorinSun16} proved the stronger inequality $\alpha_1(G)+\tau_B(G)\le \frac{|V(G)|^2}{4},$ where $\tau_B(G)$ is the minimum number of edges whose deletion makes $G$ bipartite. Since $\tau_1(G)\le\tau_B(G)$, their result implies the preceding vertex-order inequality. More recently, Bujt{\'a}s et al.~\cite{BujtasEtAl25} studied coverings of $E(G)$ by edges and triangles and established sharp bounds involving $\alpha_1(G)$.

Our main result is the following.

\begin{theorem}\label{thm:main}
As $m \to \infty$,
\[
\lim_{m\to\infty} \min_{\substack{G\\ |E(G)|=m}} \frac{\alpha_1(G) + \tau_1(G)}{m^{2/3}} = \frac{3}{2}.
\]
\end{theorem}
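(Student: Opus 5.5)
The proof has two halves: a construction giving the upper bound $3/2$, and a matching lower bound. The lower bound rests on one inequality relating $\alpha_1$ to $\tau_1$.

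\emph{Upper bound.} I would use the split graph. Let $k=\lfloor m^{1/3}\rfloor$ and take a clique $K$ on $k$ vertices. Add an independent set $S$ of $s$ vertices, each joined to all of $K$. Choose $s\approx (m-\binom{k}{2})/k$, and add $O(k)$ pendant edges to hit exactly $m$ edges; pendant edges lie in no triangle, so they change $\alpha_1+\tau_1$ by at most $O(k)$.

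Every triangle uses at least one edge of $K$, so deleting $E(K)$ gives $\tau_1\le\binom{k}{2}\le k^2/2$.

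For $\alpha_1$, let $F$ be a set meeting each triangle in at most one edge. Inside $K$, any two edges of $F$ must be disjoint, so $F$ has at most $k/2$ edges there. If $v\in S$ and $va,vb\in F$, then $vab$ is a triangle containing two edges of $F$. Hence each $v\in S$ contributes at most one edge, and $\alpha_1\le s+k/2+O(k)$.

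Since $ks\approx m$, we get $\alpha_1+\tau_1\le k^2/2+m/k+O(m^{1/3})$. This equals $(\tfrac12+1+o(1))m^{2/3}$ at $k=m^{1/3}$, so the $\limsup$ is at most $3/2$.

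\emph{Lower bound, reduction.} Let $T$ be a minimum triangle cover, $\tau=|T|=\tau_1(G)$, and let $H=G-T$, which is triangle-free with $m-\tau$ edges. I would aim to prove the key inequality
\[
\alpha_1(G)\ \ge\ (1-o(1))\,\frac{m-\tau}{\sqrt{2\tau}}\qquad(\text{for }\tau\to\infty).
\]
This is tight on the construction: there $\sqrt{2\tau}=k$ and $(m-\tau)/k\approx s$. Granting it, set $\tau=x^2/2$. Then
\[
\alpha_1+\tau_1\ge \frac{x^2}{2}+\frac{m}{x}-O(x),
\]
whose minimum over $x>0$ is $\tfrac32m^{2/3}$, attained at $x=m^{1/3}$. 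If $\tau$ is bounded, or $\tau\gg m^{2/3}$, the bound is trivial. When $\tau$ is bounded, $\alpha_1\ge$ the key bound already gives $\Omega(m)$. In the intermediate range the $O(\cdot)$ errors are lower order.

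\emph{The key inequality (main obstacle).} Since $H$ is triangle-free, every triangle of $G$ has at least one edge in $T$. Two $H$-edges $vu,vw$ lie in a common triangle exactly when $uw\in T$. So a set of $H$-edges is triangle-independent iff no two of them form a cherry $u\!-\!v\!-\!w$ in $H$ whose ends are joined by a $T$-edge. Define the conflict graph $C$ on $E(H)$ accordingly.

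The first thing I would try is local and then assembled globally. At a vertex $v$, the conflicts among $H$-edges at $v$ form a copy of $T[N_H(v)]$. Any graph with at most $\tau$ edges on $d$ vertices has an independent set of size about $\max(d-\sqrt{2\tau},\,1)$, or more precisely by Caro--Wei $\ge d^2/(d+2e)$. The extremal shape is a $\sqrt{2\tau}$-clique, which is what should produce the denominator $\sqrt{2\tau}$.

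To glue local choices into a global set, I would orient $H$ randomly, or take a random vertex ordering. Each vertex then selects an independent set of its out-edges in $C$, keeping only selections whose conflicts with other vertices' choices are resolved by a Caro--Wei type random-ordering argument on $C$. Bounding the expected number of surviving edges by $\sum_v d_H(v)/\sqrt{2\tau}$ uses $\sum_v e(T[N_H(v)])\le$ the number of $T$-edges times codegrees. This codegree bound is where I expect the real difficulty.

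If the straightforward count fails, my fallback is an extremal/stability argument. I would take a minimal counterexample and show that large codegrees on $T$-edges force $T$ to concentrate on a near-clique of order $\sqrt{2\tau}$. That would push $G$ toward the split structure, where the bound can be verified directly.
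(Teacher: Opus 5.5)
\paragraph{Verdict: the upper bound is fine, but the lower bound rests on an unproved inequality.} Your upper-bound construction is correct and is the paper's. The paper pads with a disjoint matching rather than pendant edges; either works, since an edge in no triangle adds exactly one to $\alpha_1$ and nothing to $\tau_1$. Your reduction of the lower bound to $\alpha_1(G)\ge(1-o(1))(m-\tau)/\sqrt{2\tau}$ is also sound.

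\paragraph{The gap.} That inequality is the entire content of the lower bound, and you never prove it. The local Caro--Wei plus random-orientation scheme is only sketched, and you yourself name its central estimate as the real difficulty. The straightforward count does not close it:
\begin{itemize}
\item From $e(T[N_H(v)])\le\min\{\tau,d_H(v)^2/2\}\le d_H(v)\sqrt{\tau/2}$ you get $e(C)\le e(H)\sqrt{2\tau}$.
\item Tur\'an's bound on the conflict graph then gives only $e(H)/(1+2\sqrt{2\tau})$. The factor $2$ is lost because every $H$-edge collects conflicts at both of its endpoints.
\item Fed into your optimisation, this yields the constant $\tfrac32\cdot 2^{-2/3}\approx 0.94$, not $\tfrac32$.
\end{itemize}
Caro--Wei alone cannot give your inequality in the generality you state it. Take $G=K_n$ with $n$ even and $T$ two disjoint cliques on $n/2$ vertices. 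The conflict graph is then the $(n-2)$-regular rook's graph on $n^2/4$ vertices, so Caro--Wei gives about $n/4$. Your inequality claims about $n/(2\sqrt2)$, and the true value is $n/2$. The stability fallback is a plan, not an argument. Separately, your side claim that a graph with $\tau$ edges on $d$ vertices has an independent set of size about $d-\sqrt{2\tau}$ is false: a perfect matching on $d$ vertices has independence number $d/2$.

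\paragraph{The missing idea.} A single global colouring makes the local choices automatically consistent.
\begin{itemize}
\item Fix a proper colouring $\varphi\colon V(G)\to\mathbb{Z}_k$ of the cover graph $T$, with $k=\chi(T)$. Split $E(H)$ into the $k$ classes $\{uv:\varphi(u)+\varphi(v)\equiv a \pmod k\}$.
\item Suppose two edges $uv,uw$ of one class lay in a common triangle. Then $\varphi(v)=\varphi(w)$. But $vw\in T$, because $T$ covers that triangle and $uv,uw\notin T$. This contradicts properness.
\item So each class is triangle-independent, and pigeonhole gives $\alpha_1(G)\ge(m-\tau)/k$.
\item Since $\chi(T)=k$, every two colour classes are joined by a $T$-edge (otherwise you could merge them). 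Hence $\tau\ge\binom{k}{2}$, i.e.\ $k\le\tfrac12+\sqrt{2\tau+\tfrac14}$.
\end{itemize}
This is your key inequality with an explicit error term. It holds for every $\tau$, including the bounded case your version does not cover.

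Locally, the rule keeps at each vertex $u$ exactly the $H$-edges into the single $T$-independent colour class $a-\varphi(u)$. Because the rule is symmetric in $u$ and $v$, there is nothing to glue. The paper then finishes with $\alpha_1+\tau_1\ge\frac mk+\tau(1-\frac1k)\ge\frac mk+\frac{(k-1)^2}{2}$. AM--GM turns this into $\alpha_1+\tau_1\ge\frac32m^{2/3}-2m^{1/3}$.
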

\paragraph{Notation.}
For a graph $G$, $\chi(G)$ denotes its chromatic number. If $A\subseteq V(G)$, then $G[A]$
denotes the subgraph of $G$ induced by $A$, and $G-A$ means
$G[V(G)\setminus A]$. A spanning subgraph of $G$ is a subgraph with
vertex set $V(G)$. We write $E_G(A)=E(G[A])$; if
$A,B\subseteq V(G)$ are disjoint, then $E_G(A,B)$ denotes the set of
edges of $G$ with one endpoint in $A$ and the other in $B$. We omit
the subscript $G$ when the ambient graph is clear. For a positive
integer $k$, set $\mathbb Z_k=\mathbb Z/k\mathbb Z$. For
vertex-disjoint graphs $G$ and $H$, $G\vee H$ is obtained from their
disjoint union by adding all edges between $V(G)$ and $V(H)$.

\section{Proof of main result}

\begin{proposition}\label{prop:lower}
Every graph $G$ with $m\ge1$ edges satisfies
\[
\alpha_1(G)+\tau_1(G)
\ge
\min_{k\ge1}\left\{\frac{m}{k}+\frac{(k-1)^2}{2}\right\},
\]
where the minimum is over positive integers $k$.
\end{proposition}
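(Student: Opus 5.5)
The plan is to fix an optimal triangle edge cover $F$, with $|F|=\tau_1(G)$, and to work with the triangle-free spanning subgraph $H=G-F$, which has $m-\tau_1(G)$ edges. The parameter $k$ in the minimum will be chosen from the structure of $H$, namely a proper vertex colouring of $H$ with $k=\chi(H)$ colours. The target is two separate inequalities tied to this same $k$:
\[
\alpha_1(G)\ \ge\ \frac{m-\tau_1(G)}{k}\quad\text{(up to a correction absorbed by $F$)},\qquad \tau_1(G)\ \ge\ \frac{(k-1)^2}{2}.
\]
They should combine to give $\alpha_1+\tau_1\ge m/k+(k-1)^2/2$, possibly after trading some of the $F$-edges between the two terms. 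To make the combination clean, I would aim for the stronger form $\alpha_1(G)\ge (m-\tau_1)/k+\tau_1/k$, i.e.\ $\alpha_1\ge m/k$. Alternatively, I would use the slack $\tau_1(1-1/k)$ against the deficit $(k-1)^2/2$.

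For the $\alpha_1$ part, I would take colour classes $V_1,\dots,V_k$ of $H$. Every $G$-edge inside a class $V_i$ lies in $F$. I would then look for a triangle-independent set among the $H$-edges. The natural candidates are the edges of a suitable cut $E_H(V_i,V_j)$, or, better, the edges of $H$ assigned to a colour pair, averaged over the $k$ ``shifted'' groupings obtained by indexing classes by $\mathbb Z_k$ and taking edges $V_iV_j$ with $j-i$ in a fixed residue. A triangle of $G$ could still meet such a set twice, through a third edge in $F$. For that reason, the averaging must be done so that each triangle meets at most one chosen edge, and the pigeonhole argument then yields one grouping carrying at least $m/k$ of the weight.

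For the $\tau_1$ part, I would use criticality. Pass to a $k$-critical subgraph, or use a greedy colouring in which every vertex of $V_j$ has an $H$-neighbour in each $V_i$ with $i<j$. This gives, for every pair $i<j$, many configurations. Combining them with the maximality of $F$ (each edge of $F$ closes a triangle with two $H$-edges) should produce at least about $\binom{k-1}{2}+(k-1)/2\approx (k-1)^2/2$ edges of $F$. The hypothesis that $F$ is a minimum cover, rather than merely some cover, is what forces $F$ to be large when $H$ needs many colours.

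The main obstacle is the first inequality, because triangles whose third edge lies in $F$ can spoil the independence of cut-type edge sets. If the colouring approach fails there, I would try instead to interpolate with a greedy decomposition. This means repeatedly extracting maximum triangle-independent sets $I_1,I_2,\dots$ from $G$ and letting $k$ be the number of rounds before the remainder becomes coverable cheaply. This gives $m\le k\alpha_1+{}$(remainder) directly. In that case the work shifts to showing that having $k-1$ nontrivial rounds forces $\tau_1\ge (k-1)^2/2$, by exhibiting edge-disjoint triangles built from one edge of each earlier round.
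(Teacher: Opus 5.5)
\textbf{There is a genuine gap, and it lies in the choice of $k$.} You take $k=\chi(H)$, where $H=G-F$ is the triangle-free remainder. For this $k$ the inequalities you are aiming at are false, not merely hard.

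\emph{The $\tau_1$ bound fails.} The bound $\tau_1(G)\ge (k-1)^2/2$ fails for every triangle-free $G$ with an edge. There $F=\emptyset$ and $H=G$, while $\chi(G)$ can be arbitrarily large (Mycielski graphs). So minimality of $F$ cannot force $F$ to be large when $H$ needs many colours.

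\emph{The $\alpha_1$ bound fails.} The bound $\alpha_1(G)\ge (m-\tau_1(G))/k$ fails for $K_n$ with $n\ge 6$ even. Every minimum cover leaves a copy of $K_{n/2,n/2}$, so $k=2$ and $(m-\tau_1)/k=n^2/8$. But $\alpha_1(K_n)=n/2$, since triangle-independent sets in $K_n$ are matchings. Your mechanism breaks exactly here. Take a triangle $uvw$ with $uv,uw\in E(H)$ and $vw\in F$. Nothing stops $v,w$ from getting the same $H$-colour, since $vw\notin E(H)$. Then $uv$ and $uw$ join the same pair of classes, and every class-pair grouping puts them together.

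\emph{Even the combined bound fails.} Let $G=K_4\vee\overline{K_r}$ with $r\ge 7$. A cover missing an edge $ab$ of the $K_4$ must contain $va$ or $vb$ for all $r$ independent vertices $v$. Hence the unique minimum cover is $E(K_4)$, $H=K_{4,r}$, and $k=2$. Yet $\alpha_1+\tau_1\le (r+2)+6$: at most two clique edges, forming a matching, plus at most one edge at each independent vertex. This is less than $\frac m2+\frac12=2r+\frac72$. The same happens for the extremal split graphs with $q\approx m^{1/3}$, so no trading of $F$-edges can rescue $k=\chi(G-F)$.

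Two further points. Your ``stronger form'' $\alpha_1\ge m/k$ is also false: for $K_n$ it fails both with $k=\chi(G-F)=2$ and with $k=\chi(F)=n/2$. Only the slack version is viable. The greedy-extraction fallback gives no actual mechanism for the $\tau_1$ bound.

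\textbf{The missing idea is to colour the cover rather than the remainder.} Let $k=\chi(F)$, with $F$ viewed as a spanning subgraph, and fix a proper colouring $\varphi\colon V(G)\to\mathbb Z_k$ of $F$.

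Then $\tau_1(G)=|F|\ge\binom k2$ is immediate. Two colour classes spanning no $F$-edge could be merged, contradicting $\chi(F)=k$.

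Now split $E(H)$ into the classes $R_a=\{uv\in E(H):\varphi(u)+\varphi(v)\equiv a\}$. Suppose a triangle contained $uv,uw\in R_a$. Its third edge $vw$ cannot lie in $H$, because $F$ is a cover, so $vw\in F$ and $\varphi(v)\ne\varphi(w)$. This contradicts $\varphi(u)+\varphi(v)\equiv\varphi(u)+\varphi(w)$.

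Use sums, not the differences in your sketch. For an unordered edge a difference is defined only up to sign, and $\varphi(v)=\varphi(u)+d$, $\varphi(w)=\varphi(u)-d$ would slip through.

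So each $R_a$ is triangle-independent, and pigeonhole gives $\alpha_1\ge (m-\tau_1)/k$. Your slack combination then finishes:
\[
\alpha_1+\tau_1\ \ge\ \frac mk+\Bigl(1-\frac1k\Bigr)\tau_1\ \ge\ \frac mk+\Bigl(1-\frac1k\Bigr)\binom k2=\frac mk+\frac{(k-1)^2}{2}.
\]
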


\begin{proof}
Let $C$ be a spanning subgraph of $G$ such that $E(C)$ is a minimum triangle edge cover; thus $|E(C)|=\tau_1(G)$.
Set $k=\chi(C)$ and let $R$ be the spanning subgraph with edge set $E(R)=E(G)\setminus E(C)$. Because $E(C)$ meets every triangle, $R$ is triangle‑free.

Fix a proper coloring $\varphi\colon V(C)\to\mathbb{Z}_k$. For $a\in\mathbb{Z}_k$ define
\[
R_a=\{\,uv\in E(R):\varphi(u)+\varphi(v)\equiv a\pmod{k}\,\}.
\]
The family $\{R_a\}$ partitions $E(R)$. We claim each $R_a$ is triangle‑independent.
If some triangle contained two edges $uv,uw\in R_a$, then its third edge $vw$ must lie in $E(C)$ (the two edges are not in $E(C)$ and $E(C)$ is a triangle cover). From the definition of $R_a$ we obtain
$\varphi(u)+\varphi(v)\equiv\varphi(u)+\varphi(w)\equiv a\pmod{k}$, hence $\varphi(v)=\varphi(w)$, contradicting the properness of $\varphi$ on the edge $vw\in E(C)$. Therefore $|R_a|\le\alpha_1(G)$ for every $a\in\mathbb{Z}_k$.
By the pigeonhole principle there exists $b\in\mathbb{Z}_k$ with
$|R_b|\ge|E(R)|/k=(m-\tau_1(G))/k$, yielding
\[
\alpha_1(G)\ge\frac{m-\tau_1(G)}{k}. \tag{1}
\]

To bound $\tau_1(G)$, note that $\varphi$ partitions $V(C)$ into $k$ colour classes. If two distinct classes contained no edge of $C$, merging them would give a proper $(k-1)$-colouring of $C$, contradicting $\chi(C)=k$. Hence every pair of colour classes contributes at least one edge to $C$, so
\[
\tau_1(G)=|E(C)|\ge\binom{k}{2}. \tag{2}
\]

Combining (1) and (2),
\[
\alpha_1(G)+\tau_1(G)\ge\frac{m-\tau_1(G)}{k}+\tau_1(G)
\ge\frac{m}{k}+\Bigl(1-\frac1k\Bigr)\binom{k}{2}
=\frac{m}{k}+\frac{(k-1)^2}{2}.
\]
This inequality holds for the chromatic number $k$ of any minimum triangle edge cover of $G$, from which the desired bound follows.
\end{proof}

\begin{corollary}\label{cor:explicit-lower}
For every graph $G$ with $e(G)=m$, $\alpha_1(G)+\tau_1(G)\ge \frac32\,m^{2/3}-2m^{1/3}.$
\end{corollary}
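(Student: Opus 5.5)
We derive the corollary from \Cref{prop:lower}. It suffices to bound
\[
f(k)=\frac{m}{k}+\frac{(k-1)^2}{2}
\]
from below by $\frac32 m^{2/3}-2m^{1/3}$ for every positive integer $k$. We may take $k$ real, $k\ge1$. The idea is to compare $f$ with the continuous minimizer at $k\approx m^{1/3}$, where $\frac mk+\frac{k^2}{2}$ attains its minimum value $\frac32 m^{2/3}$.

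\textbf{The comparison inequality.} Write $x=m^{1/3}$. Expanding gives
\[
f(k)=\frac mk+\frac{k^2}{2}-k+\frac12.
\]
By AM--GM, $\frac{m}{2k}+\frac{m}{2k}+\frac{k^2}{2}\ge 3\sqrt[3]{m^2/8}=\frac32 m^{2/3}$. Hence
\[
f(k)\ge \tfrac32 x^2-k+\tfrac12 .
\]

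\textbf{Small $k$.} If $k\le 2x$, the bound above immediately gives $f(k)\ge \frac32x^2-2x+\frac12$, which is more than we need.

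\textbf{Large $k$.} If $k>2x$, we drop the term $\frac mk$ and use $f(k)\ge \frac{(k-1)^2}{2}$. There are two sub-cases.
\begin{itemize}
\item When $2x\ge1$, we have $(k-1)^2/2\ge(2x-1)^2/2=2x^2-2x+\frac12$, and this is at least $\frac32x^2-2x$.
\item Since $m\ge1$ forces $x\ge1$, the sub-case $2x<1$ cannot occur.
\end{itemize}

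Combining the two cases gives $f(k)\ge\frac32 m^{2/3}-2m^{1/3}$ for all $k\ge1$. The corollary then follows from \Cref{prop:lower}.

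The only step requiring care is the split at $k=2x$. The AM--GM estimate loses a linear term $k$, so it is useful only for moderate $k$. For large $k$ the quadratic term must take over, and the threshold $2x$ is exactly where both arguments give error at most $2m^{1/3}$.
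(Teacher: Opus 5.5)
Your proposal is correct and is essentially the paper's own proof. It uses the same split at $k=2m^{1/3}$, the same AM--GM on $\frac{m}{2k},\frac{m}{2k},\frac{k^2}{2}$ to absorb the $-k+\frac12$ loss when $k\le 2m^{1/3}$, and the same bound $\frac{(k-1)^2}{2}>\frac{(2m^{1/3}-1)^2}{2}$ when $k>2m^{1/3}$, with your explicit check that $2m^{1/3}-1\ge 0$ making the monotonicity step (left implicit in the paper) airtight.
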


\begin{proof}
By Proposition~\ref{prop:lower} there exists a positive integer $k$ such that
\[
\alpha_1(G)+\tau_1(G)\ge \frac{m}{k}+\frac{(k-1)^2}{2}.
\]

\noindent\textit{Case 1:} $k\le 2m^{1/3}$. Then
\[
\frac{m}{k}+\frac{(k-1)^2}{2}
= \frac{m}{k}+\frac{k^2}{2}-k+\frac12
\ge \frac{m}{k}+\frac{k^2}{2}-2m^{1/3}.
\]
By the arithmetic--geometric mean inequality applied to  $\frac{m}{2k},\frac{m}{2k},\frac{k^2}{2}$ yields
\[
\frac{m}{k}+\frac{k^2}{2}
= \frac{m}{2k}+\frac{m}{2k}+\frac{k^2}{2}
\ge 3\sqrt[3]{\frac{m}{2k}\cdot\frac{m}{2k}\cdot\frac{k^2}{2}}
= \frac32\,m^{2/3}.
\]
Hence $\alpha_1(G)+\tau_1(G)\ge \frac32m^{2/3}-2m^{1/3}$.

\noindent\textit{Case 2:} $k>2m^{1/3}$. Then
\[
\alpha_1(G)+\tau_1(G)\ge \frac{(k-1)^2}{2}
> \frac{(2m^{1/3}-1)^2}{2}
= 2m^{2/3}-2m^{1/3}+\frac12
> \frac32\,m^{2/3}-2m^{1/3}.
\]
In both cases the desired inequality holds.
\end{proof}

For integers $q,r\ge 1$, let $H=K_q\vee\overline{K_r}$; this split graph consists of a $q$-clique $Q$, an independent set $I$ of size $r$, and all edges between $Q$ and $I$.

\begin{lemma}\label{lem:split}
For all $q,r\ge 1$, $\alpha_1(H)\le r+\Bigl\lfloor\frac{q}{2}\Bigr\rfloor$ and $\tau_1(H)\le\binom{q}{2}.$
\end{lemma}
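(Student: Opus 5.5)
The bound on $\tau_1(H)$ is immediate: every triangle of $H$ has at least two vertices in $Q$, because $I$ is independent and so no triangle can use two vertices of $I$. Hence every triangle contains an edge of $H[Q]$. Consequently $E(H[Q])$ is a triangle edge cover, of size $\binom{q}{2}$.

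For $\alpha_1(H)$, I will fix a triangle-independent set $S\subseteq E(H)$ and split it as $S=S_Q\cup S_I$, where $S_Q=S\cap E(Q)$ and $S_I=S\cap E(Q,I)$. The key structural step concerns the edges of $S_I$. For each $x\in I$, let $d_x$ be the number of edges of $S_I$ at $x$, and let $N_x\subseteq Q$ be the set of their other endpoints. If $u,v\in N_x$ are distinct, then $xuv$ is a triangle containing two edges of $S$, which is forbidden. So $d_x\le 1$ for every $x$, and $|S_I|\le r$. This alone is not enough, since $S_Q$ could still be large, so I need a sharper count.

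The step I expect to be the crux is the interaction between the two parts. If $x$ is joined by $S_I$ to $u\in Q$, then no edge $uv\in S_Q$ is allowed, because $xuv$ would be a triangle containing two $S$-edges. Two further constraints hold. First, $S_Q$ is triangle-independent inside the clique $Q$, so no two edges of $S_Q$ share a vertex (any two edges at a common vertex of a clique lie in a triangle). Thus $S_Q$ is a matching in $Q$. Second, every vertex of $Q$ used by $S_I$ is unmatched in $S_Q$. Let $U\subseteq Q$ be the set of vertices incident to $S_I$. Then $S_Q$ is a matching on $Q\setminus U$, so $|S_Q|\le\lfloor (q-|U|)/2\rfloor\le\lfloor q/2\rfloor$. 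Together with $|S_I|\le r$ from above, this gives $|S|=|S_Q|+|S_I|\le r+\lfloor q/2\rfloor$, which is the claimed bound. The subtlety I will double-check is that the matching condition on $S_Q$ is genuinely implied by triangle-independence. It is, because for $uv,uw\in E(Q)$ the edge $vw$ is also present in the clique, so $uvw$ is a triangle.
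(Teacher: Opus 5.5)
Your proof is correct and follows the paper's argument: each vertex of $I$ is incident to at most one edge of the triangle-independent set, the edges inside the clique $Q$ form a matching, and $E(Q)$ meets every triangle. Your extra observation that vertices of $Q$ used by $S_I$ are unmatched in $S_Q$ is valid but unnecessary. The separate bounds $|S_I|\le r$ and $|S_Q|\le\lfloor q/2\rfloor$ already give the claim, which is exactly what the paper uses, so the ``crux'' you anticipated is not needed.
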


\begin{proof}
Let $F$ be a triangle‑independent edge set in $H$.
We first bound $|F\cap E(Q)|$. If two edges of $F$ inside $Q$ shared a vertex, then together with the edge joining their other endpoints (which exists because $Q$ is a clique) they would form a triangle containing two edges of $F$, contradicting the triangle‑independence of $F$. Thus $F\cap E(Q)$ is a matching, and $|F\cap E(Q)|\le\lfloor q/2\rfloor$.
Now consider edges incident to the independent set $I$. Suppose some $v\in I$ is incident to two edges $vu,vw\in F$ with $u,w\in Q$. Since $Q$ is a clique, $uw\in E(H)$. Then $\{u,v,w\}$ is a triangle containing two edges of $F$, again a contradiction. Hence every vertex of $I$ is incident to at most one edge of $F$, so $|F\cap E(Q,I)|\le r$. As $I$ is independent, $F$ consists only of edges inside $Q$ and edges between $Q$ and $I$, giving
\[
|F| = |F\cap E(Q)| + |F\cap E(Q,I)| \le \Bigl\lfloor\frac{q}{2}\Bigr\rfloor + r.
\]
This proves the bound on $\alpha_1(H)$.

For $\tau_1(H)$, note that removing all edges inside $Q$ leaves the complete bipartite graph between $Q$ and $I$, which is bipartite and therefore triangle‑free. Hence the set of $\binom{q}{2}$ edges inside $Q$ is a triangle edge cover, yielding $\tau_1(H)\le\binom{q}{2}$.
\end{proof}

\begin{proof}[Proof of Theorem~\ref{thm:main}]
The lower bound $\alpha_1(G)+\tau_1(G)\ge \frac32\,m^{2/3}-2m^{1/3}$ follows from Corollary~\ref{cor:explicit-lower}. For the upper bound we construct, for all large $m$, an $m$-edge graph $G_m$ achieving $\frac32\,m^{2/3}+O(m^{1/3})$.

Set $q=\lfloor m^{1/3}\rfloor$; then $q^3\le m<(q+1)^3$, so $m=q^3+O(q^2)$. Let $r=\Bigl\lfloor\frac{m-\binom{q}{2}}{q}\Bigr\rfloor$, $s=m-\binom{q}{2}-qr.$ Since \(r = \bigl\lfloor (m-\binom{q}{2})/q \bigr\rfloor\), the division algorithm yields \(m-\binom{q}{2} = qr + s\) with \(0 \le s < q\). Take $H=K_q\vee\overline{K_r}$ and let $G_m$ be the disjoint union of $H$ and a matching of size $s$. Then $e(G_m)=m$. By Lemma~\ref{lem:split},
\[
\alpha_1(H)\le r+\Bigl\lfloor\frac{q}{2}\Bigr\rfloor,\qquad \tau_1(H)\le\binom{q}{2},
\]
and for the matching component $\alpha_1=s$, $\tau_1=0$. Since both invariants are additive over components,
\begin{align*}
\alpha_1(G_m)+\tau_1(G_m)
&\le r+\frac{q}{2}+s+\binom{q}{2}+O(1) = r+s+\frac{q^2}{2}+O(1).
\end{align*}
Because $r = \left\lfloor \big(m - \binom{q}{2}\big)/q \right\rfloor$, $q^2 = m^{2/3} + O(m^{1/3})$, and $0 \le s < q$, we obtain
\[
\alpha_1(G_m) + \tau_1(G_m) \le \frac{3}{2}m^{2/3} + O(m^{1/3}),
\]
which, together with the lower bound, yields
$$
\lim_{m\to\infty} \min_{\substack{G\\ |E(G)|=m}} \frac{\alpha_1(G) + \tau_1(G)}{m^{2/3}} = \frac{3}{2},
$$
completing the proof.
\end{proof}

\section*{Declaration on the Use of Generative AI}
The authors used ChatGPT 5.6 Pro to assist in discussing proof strategies, checking proofs, and
improving exposition.

\end{document}